\begin{document}
\selectlanguage{english}%Вказуємо для вього документу основну мову тексту статті
\udk{519.21}%Набираємо УДК тут до якого відноситься дана стаття

\author{O.~Volkov} % І.~П.~Прізвище
\institution{University of California, Berkeley} % заклад де працює автор
\position{Master's student at the Department of Statistics} % посада
\mail{oleksandr\_volkov@berkeley.edu}%електронна адреса
\orcid{0009-0004-0247-9921}% orcid в форматі 1111-2222-3333-4444

\author{N.~Voinalovych} % І.~П.~Прізвище
\institution{Volodymyr Vynnychenko Central Ukrainian State University} % заклад де працює автор
\position{Associate Professor of the Department of Mathematics and Digital Technologies} % посада
\academictitle{Candidate of Pedagogical Sciences} %науковий ступінь
\mail{vojnalovichn@gmail.com}%електронна адреса
\orcid{0000-0002-0523-7889}% orcid в форматі 1111-2222-3333-4444

\umpsuperchapter{Розділ 1: Математика і статистика}{Chapter 1: Mathematics and Statistics} %Вибираємо обов'язково розділ до якого відноситься стаття на нижні колонтитули, інший розділ просто закоментувати. За замовчуванням стоїть "Розділ 1"
%\umpsuperchapter{Розділ 2: Інформатика, комп'ютерні науки та прикладна математика}{Chapter 2: Informatics, Computer Science and Applied Mathematics}

\shortauthor{Волков~О., Войналович~Н.} %Імена авторів українською мовою для змісту
\capitalauthor{O.~VOLKOV, N.~VOINALOVYCH} %Імена авторів для колонтитулів (на основній мові статті)
\authoreng{Volkov~O., Voinalovych~N.} %Імена авторів англійською мовою для змісту

\title{Про розподіл степеневого ряду з параметризацією середнім} %Назва статті українською мовою для змісту
\shorttitle{ON A POWER SERIES DISTRIBUTION WITH MEAN PARAMETERIZATION} %Назва статті для колонтитулів (на основній мові статті)
%\capitaltitle{}% Вказуємо назву великими літерами вручну при потребі (працює виключно українською)
\titleeng{On a Power Series Distribution with Mean Parameterization}%Назва статті англійською мовою для змісту
%\capitaltitleeng{}% Вказуємо назву великими літерами англійською мовою вручну при потребі (працює виключно англійською)
\maketitle
\received{16.09.2025}%Не заповнювати

%Визначення власних комманд
\newcommand{\hm}[1]{#1\nobreak\discretionary{}{\hbox{\ensuremath{#1}}}{}} %Можна використовувати для дублювання знаків операцій і відношеь при переносах, при оформленні статті українською мовою

\begin{abstract}
\selectlanguage{english}
\smallindent The article examines the distribution of the power series of the function $w\left(y\right)=\left(1+\sqrt{1-y} \right)^{-\frac{1}{2} } $. The distribution of the considered function into a power series is obtained $\left(1+\sqrt{1-y} \right)^{-\frac{1}{2} } =\sum _{m=0}^{\infty }\frac{\left(4m\right)!16^{-m} }{\left(2m\right)!\left(2m+1\right)\sqrt{2} }  y^{m} $. The dispersion function is found $v\left(x\right)=x\left(2x+1\right)\left(4x+1\right)$, $x>0$. A distribution with mean parameterization is constructed
\[\Pr \left(\xi =k\right)=\begin{pmatrix} {4k+1} \\ {2k} \end{pmatrix}2^{-k} x^{k} \left(2k+1\right)^{k+\frac{1}{2} } \left(4k+1\right)^{-2k-\frac{3}{2} },\ \ x>0.\]
It is proved that the raw moments $\alpha _{m} $, central moments $\mu _{m} ,$ cumulants $\chi _{m} $, $m=1,\; 2,\; \ldots$ satisfy the following recurrence relations: $\alpha _{m+1} =x\alpha _{m} +v\left(x\right)\frac{d\alpha _{m} }{dx},$ $\alpha _{0} =1,\; \alpha _{1} =x$; $\mu _{m+1} =m\mu _{m-1} +v\left(x\right)\frac{d\mu _{m} }{dx},$ $\mu _{0} =1,\; \mu _{1} =0$; $\chi _{m+1} =v\left(x\right)\frac{d\chi _{m} }{dx} ,$ $\chi _{1} =x$.

\keywords {power series, distribution, parameterization by the mean, numerical characteristics, variance function.}

\end{abstract}

\noindent
\paragraph{Introduction} The study of power-series distributions was initially undertaken in the classical work of A.~Noack [1], wherein the notion of random variables with discrete distributions defined via power series was introduced. Further development of this direction was reflected in the works of N.~L.~Johnson, A.~W.~Kemp, and S.~Kotz [2, 3], who, in their monographs, systematized the known discrete distributions and examined them within the unified framework of power-series distributions. A significant contribution to the study of generalizations of such distributions was made by P.~S.~Consul [4] and his co-authors, who developed the theory of generalized Poisson distributions and examined a wide class of families defined by power series.

In Ukraine, research in this domain has been advanced by the contributions of Yu.~I.~Volkov [5], who investigated power-series distributions under prescribed covariance structures, as well as by the textbook authored by Yu.~I.~Volkov and N.~M.~Voinalovych [6], which provides a systematic exposition of methodologies for the construction and analysis of such distributions. Particular attention is devoted to mean parameterization, which proves convenient for analytical representation and for establishing recurrence relations for the numerical characteristics of the distribution.

Prior findings have established that mean parameterization constitutes a convenient and informative framework for the investigation of discrete distributions, as it facilitates a direct correspondence between numerical characteristics and the expected value of the random variable. This, in turn, provides opportunities for a more effective analysis of the variance function and for the derivation of recurrence relations for moments and cumulants.

The significance of the present study arises from the necessity to broaden the class of established power-series distributions, to identify novel generating functions with positive coefficients, and to establish their properties under mean parameterization. These contributions are of importance not only from a theoretical standpoint -- advancing probability theory through the provision of new distributional examples and methodological tools of analysis --- but also from an applied perspective, as the resulting findings offer potential utility in the mathematical modeling of stochastic processes, in statistical machine learning for forecasting, and in a wide range of problems in applied mathematics.

\paragraph{Theoretical foundations of the study}

\begin{definition} Let 
\[w\left(y\right)=\sum _{k=0}^{\infty }a_{k} y^{k},\ \ 0<y<R,\] 
and all coefficients of this series are non-negative numbers. The distribution of an integer random variable $\xi $, given by the formula
\[p_{k} \left(y\right)=\Pr \left(\xi =k\right)=\frac{a_{k} y^{k} }{w\left(y\right)},\ \ k=0, 1, 2, \ldots,\] 
is called the power series distribution of the function$w\left(y\right)$ with parameter y.
\end{definition}

To obtain the numerical characteristics of the distribution, generating functions are used:

\begin{enumerate}[label=\arabic*)]
\item ordinary generating function
\[P(z):=\sum _{k=0}^{\infty }p_{k} \left(y\right)z^{k}  =\frac{1}{w\left(y\right)} \sum _{k=0}^{\infty }a_{k} y^{k} z^{k}  =\frac{w\left(yz\right)}{w\left(y\right)} ,\] 
\item generating function of raw moments 
\[A\left(z\right)=P\left(e^{z} \right), \] 
\item generating function of central moments 
\[C\left(z\right)=e^{-\alpha _{1} z} A\left(z\right),\ \ (\alpha _{1} \textrm{ --- expected value } \xi),\]

\item generating function of cumulants
\[K\left(z\right)=\log A\left(z\right).\] 
\end{enumerate}

Then expected value $E\xi =\frac{yw'\left(y\right)}{w\left(y\right)} $, variance $D\xi =y\frac{dE\xi }{dy} $.

Let $E\xi $ be denoted by \textit{x}. Then the function $x=\frac{yw'\left(y\right)}{w\left(y\right)} $ on the interval $\left(0,R\right)$ has an inverse $y=y\left(x\right)$, because $\frac{dx}{dy} =\frac{D\xi }{y} >0$. Therefore, it is possible to study the power series distribution with a different parameterization, specifically by characterizing the distribution with the parameter \textit{x}. This parameterization is called parameterization by the mean (see, for example, [2], p. 670). With this parameterization, we have 
\[\Pr \left(\xi =k\right)=p\left(k,x\right)=p_{k} \left(y\left(x\right)\right)=\frac{\left(y\left(x\right)\right)^{k} a_{k} }{w\left(y\left(x\right)\right)},\ \ k=0, 1, 2, \ldots,\] 
\[x\in X=\left(0,\; \frac{Rw'\left(R\right)}{w\left(R\right)} \right),\ \ D\xi =\frac{y}{\frac{dy}{dx} } =\frac{y\left(x\right)}{y'\left(x\right)} .\] 

The function $v\left(x\right)=\frac{y\left(x\right)}{y'\left(x\right)} $ is called the variance function of the distribution. 

\textbf{For example,}

\begin{itemize}[label=$\bullet$]
\item if $w\left(y\right)=1+y$, is $v\left(x\right)=x\left(1-x\right)$, $0<x<1$;

\item if $w\left(y\right)=e^{y} $, is $v\left(x\right)=x$, $x>0$;

\item if $w\left(y\right)=\left(y-1\right)^{-1} $, is $v\left(x\right)=x\left(1+x\right)$, $x>0$.
\end{itemize}

\paragraph{The main result} The goal of this work is to study the power series distribution of the function $w\left(y\right)=\left(1+\sqrt{1-y} \right)^{-\frac{1}{2} } $.

We will find the expansion of this function into a power series and verify that the coefficients of this series are non-negative. 

Since 
\[\frac{1-\sqrt{1-y} }{y} =\frac{1+\sqrt{y} }{2y} +\frac{1-\sqrt{y} }{2y} -2\sqrt{\frac{1-y}{4y^{2} } } =\left(\sqrt{\frac{1+\sqrt{y} }{2y} } -\sqrt{\frac{1-\sqrt{y} }{2y} } \right)^{2} \], 
will give us
\[\left(1+\sqrt{1-y} \right)^{-\frac{1}{2} } =\sqrt{\frac{1-\sqrt{1-y} }{y} } =\sqrt{\frac{1+\sqrt{y} }{2y} } -\sqrt{\frac{1-\sqrt{y} }{2y} } .\] 
Next, using the binomial series, we will have
\[\sqrt{1+\sqrt{y} } =\sum _{k=0}^{\infty }\frac{1}{k!}  \frac{1}{2} \left(\frac{1}{2} -1\right)\ldots\left(\frac{1}{2} -k+1\right)\left(\sqrt{y} \right)^{k} ,\] 
\[\sqrt{1-\sqrt{y} } =\sum _{k=0}^{\infty }\frac{1}{k!}  \frac{1}{2} \left(\frac{1}{2} -1\right)\ldots\left(\frac{1}{2} -k+1\right)\left(-\sqrt{y} \right)^{k} ,\] 
\[\sqrt{1+\sqrt{y} } -\sqrt{1-\sqrt{y} } =\sum _{m=0}^{\infty }\frac{2}{\left(2m+1\right)!}  \frac{1}{2} \left(\frac{1}{2} -1\right)\ldots\left(\frac{1}{2} -\left(2m+1\right)+1\right)\left(\sqrt{y} \right)^{2m+1} =\] 
\[=\sum _{m=0}^{\infty }\frac{\left(4m\right)!}{\left(2m\right)!\left(2m+1\right)16^{m} }  y^{m} \sqrt{y} .\] 

If this is divided by $\sqrt{2y} $, we will ultimately have
\[\left(1+\sqrt{1-y} \right)^{-\frac{1}{2} } =\sum _{m=0}^{\infty }\frac{\left(4m\right)!16^{-m} }{\left(2m\right)!\left(2m+1\right)\sqrt{2} }  y^{m} .\] 

Thus, the coefficients of this series are positive.

Let's find the variance function.

We have:
\[x=\frac{yw'\left(y\right)}{w\left(y\right)} =\frac{1-\sqrt{1-y} }{4\sqrt{1-y} } ,\] 
hence 
\[y=\frac{8x\left(2x+1\right)}{\left(4x+1\right)^{2} } .\] 
And hence 
\[v\left(x\right)=\frac{y\left(x\right)}{y'\left(x\right)} =x\left(2x+1\right)\left(4x+1\right),\ \ x>0.\] 
Then the distribution with parameterization by the mean will be such that
\[\Pr \left(\xi =k\right)=p\left(k,x\right)=\begin{pmatrix} 4k+1 \\ 2k \end{pmatrix}2^{-k} x^{k} \left(2k+1\right)^{k+\frac{1}{2} } \left(4k+1\right)^{-2k-\frac{3}{2} },\ \ x>0.\] 

Generating functions are used to obtain numerical characteristics.

It can be directly verified that the generating function $P(z)$ satisfies the differential equation
\begin{equation} \label{GrindEQ__1_} 
v\left(x\right)\frac{\partial P}{\partial x} -z\frac{\partial P}{\partial z} +xP=0,\ \ P\left(1\right)=1,\ x\in X.  
\end{equation} 

It follows that the generating function of the raw moments $A(z)$ satisfies the equation
\begin{equation} \label{GrindEQ__2_} 
v\left(x\right)\frac{\partial A}{\partial x} -\frac{\partial A}{\partial z} +xA=0,\ \ A\left(0\right)=1;  
\end{equation} 
the generating function of the central moments $C(z)$ satisfies the equation
\begin{equation} \label{GrindEQ__3_} 
v\left(x\right)\left(\frac{\partial C}{\partial x} +zC\right)-\frac{\partial C}{\partial z} =0,\ \ C\left(0\right)=1;  
\end{equation} 
the generating function of the cumulants $K(z)$ satisfies the equation
\begin{equation} \label{GrindEQ__4_} 
v\left(x\right)\frac{\partial K}{\partial x} -\frac{\partial K}{\partial z} +x=0,\ \ K\left(0\right)=0.  
\end{equation} 

\begin{theorem} The raw moments $\alpha _{m} $, central moments $\mu _{m} $, and cumulants $\chi _{m} ,$ $m=1, 2, \ldots$ satisfy the following recurrence relations:
\begin{equation} \label{GrindEQ__5_} 
\alpha _{m+1} =x\alpha _{m} +v\left(x\right)\frac{d\alpha _{m} }{dx} ,\ \ \alpha _{0} =1,\ \alpha _{1} =x,  
\end{equation} 
\begin{equation} \label{GrindEQ__6_} 
\mu _{m+1} =m\mu _{m-1} +v\left(x\right)\frac{d\mu _{m} }{dx} ,\ \ \mu _{0} =1,\ \mu _{1} =0,  
\end{equation} 
\begin{equation} \label{GrindEQ__7_} 
\chi _{m+1} =v\left(x\right)\frac{d\chi _{m} }{dx} ,\ \chi _{1} =x.  
\end{equation} 
\end{theorem}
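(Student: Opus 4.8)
The plan is to convert each of the three ordinary recurrences (\ref{GrindEQ__5_})--(\ref{GrindEQ__7_}) into a coefficient comparison inside the corresponding partial differential equation (\ref{GrindEQ__2_})--(\ref{GrindEQ__4_}). Write the three generating functions in exponential form,
\[ A(z)=\sum_{m\ge 0}\frac{\alpha_m}{m!}\,z^m,\qquad C(z)=\sum_{m\ge 0}\frac{\mu_m}{m!}\,z^m,\qquad K(z)=\sum_{m\ge 1}\frac{\chi_m}{m!}\,z^m, \]
the last having no constant term since $K(0)=0$. These series are analytic in $z$ near $0$, locally uniformly in $x\in X$ (because $y(x)$, and hence $w(y(x))$, is analytic there), so they may be differentiated term by term in $x$ and in $z$ and multiplied by $z$ term by term. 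Under this normalization the operations appearing in (\ref{GrindEQ__2_})--(\ref{GrindEQ__4_}) act transparently on coefficients: $\partial_x$ is coefficientwise differentiation in $x$ (sending $\alpha_m$ to $d\alpha_m/dx$, and so on); $\partial_z$ raises the index by one (the coefficient of $z^m/m!$ in $\partial_z C$ is $\mu_{m+1}$); and multiplication by $z$ lowers the index by one and inserts a factor $m$ (the coefficient of $z^m/m!$ in $zC$ is $m\mu_{m-1}$).

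With this dictionary the three recurrences fall out by setting the coefficient of $z^m/m!$ to zero. Substituting the $A$-series into (\ref{GrindEQ__2_}) gives $v(x)\,d\alpha_m/dx-\alpha_{m+1}+x\alpha_m=0$, which is (\ref{GrindEQ__5_}); the $m=0$ coefficient together with $A(0)=1$ forces $\alpha_0=1$ and $\alpha_1=x$ (which is also $E\xi$). Substituting the $C$-series into (\ref{GrindEQ__3_}), the term $v(x)\,zC$ is the one that supplies the lower-index contribution $m\,v(x)\,\mu_{m-1}$ to $\mu_{m+1}$, and matching coefficients yields the recurrence (\ref{GrindEQ__6_}); the constant term gives $\mu_0=C(0)=1$ and $\mu_1=v(x)\,d\mu_0/dx=0$. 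Substituting the $K$-series into (\ref{GrindEQ__4_}), the constant term reads $x-\chi_1=0$, so $\chi_1=x$, while for $m\ge 1$ the coefficient of $z^m/m!$ is $v(x)\,d\chi_m/dx-\chi_{m+1}=0$, i.e. (\ref{GrindEQ__7_}).

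If one may take (\ref{GrindEQ__1_})--(\ref{GrindEQ__4_}) as given -- they are stated just before the theorem -- then there is no genuine obstacle: the proof is the coefficient bookkeeping above, and the only points requiring care are the two index shifts, the low-order cases (checking that the $m=0$ and $m=1$ coefficients deliver exactly the stated initial data, and that the $m\mu_{m-1}$ term of (\ref{GrindEQ__6_}) is simply absent when $m=0$), and the term-by-term legitimacy noted in the first paragraph. The substantive step, if a self-contained argument is wanted, is (\ref{GrindEQ__1_}) itself -- the one the text disposes of with ``it can be directly verified''. I would obtain it by differentiating $P(z)=w(yz)/w(y)$ with $y=y(x)$ by the quotient rule, using the defining identity $y'(x)=y(x)/v(x)$ of the variance function, and checking that the resulting two terms reassemble into $z\,\partial_z P-xP$; equations (\ref{GrindEQ__2_})--(\ref{GrindEQ__4_}) then follow mechanically from $A(z)=P(e^z)$, $C(z)=e^{-xz}A(z)$, $K(z)=\log A(z)$ by the chain rule (with a division by $A$ for the cumulant equation).
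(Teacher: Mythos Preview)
Your approach is exactly the paper's: expanding each generating function as an exponential series and reading off the coefficient of $z^m/m!$ is literally the same as the paper's ``differentiate $m$ times in $z$ and set $z=0$''. The extra care you take with term-by-term justification and with the low-order cases is fine and goes slightly beyond what the paper bothers to write down.

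One point to flag: in the central-moment step you correctly identify that the $v(x)\,zC$ term of \eqref{GrindEQ__3_} contributes $m\,v(x)\,\mu_{m-1}$ to the coefficient of $z^m/m!$, but then you assert this ``yields the recurrence \eqref{GrindEQ__6_}'', which as printed has $m\,\mu_{m-1}$ without the factor $v(x)$. Your computation is right and the stated \eqref{GrindEQ__6_} is not what \eqref{GrindEQ__3_} actually gives; the paper's own one-line proof produces the same $m\,v(x)\,\mu_{m-1}$, so this is a typo in the theorem statement rather than a defect in your argument --- but you should not claim to have derived \eqref{GrindEQ__6_} verbatim when what you have derived is $\mu_{m+1}=m\,v(x)\,\mu_{m-1}+v(x)\,d\mu_m/dx$.
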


\begin{proof} To obtain relation \eqref{GrindEQ__5_}, we differentiate \eqref{GrindEQ__2_} $m$ times with respect to $z$ and substitute $z=0$. To obtain relation \eqref{GrindEQ__6_}, we differentiate \eqref{GrindEQ__3_} $m$ times with respect to $z$ and substitute $z=0$. To obtain relation \eqref{GrindEQ__7_}, we differentiate \eqref{GrindEQ__4_} $m$ times with respect to $z$ and substitute $z=0$.

In particular, from \eqref{GrindEQ__5_}, \eqref{GrindEQ__6_}, \eqref{GrindEQ__7_} we find
\[\alpha _{2} =x^{2} +x\left(2x+1\right)\left(4x+1\right),\] 
\[\alpha _{3} =x^{3} +3x^{2} \left(2x+1\right)\left(4x+1\right)+v\left(x\right)v'\left(x\right),\] 
\[\chi _{2} =v\left(x\right),\] 
\[\chi _{3} =v\left(x\right)v'\left(x\right).\] 
\end{proof}

\paragraph{Conclusions} The power series distribution of the function $w\left(y\right)=\left(1+\sqrt{1-y} \right)^{-\frac{1}{2} } $ was obtained, and its characteristics were studied in the case of parameterization by the mean. The explicit form of the dispersion function was established, and recursive relations for the raw and central moments, as well as for the cumulants, were proved.

Unlike previously known results, which mainly considered individual examples or parameterization by the initial coefficients of the series, this work constructs the distribution using mean-based parameterization, allowing a direct connection between the properties of the distribution and its mathematical expectation. This approach simplifies the analysis of the behavior of the variance and moments and also enables effective comparison of different distributions within the same class.

The value of the obtained result lies in the fact that the considered generating function has positive coefficients in its power series expansion, ensuring the correctness of probability definitions and expanding the known class of power series distribution examples. Moreover, the established recursive relations for moments and cumulants can be used for further theoretical studies and applied computations in problems of mathematical statistics and machine learning for forecasting natural and social phenomena.

\selectlanguage{ukrainian}
\umpmaketitle

\begin{abstract}
\smallindent В статті досліджено розподіл степеневого ряду функції $w\left(y\right)=\left(1+\sqrt{1-y} \right)^{-\frac{1}{2} } $. Отримано розклад розглядуваної функції в степеневий ряд $\left(1+\sqrt{1-y} \right)^{-\frac{1}{2} } =$\linebreak
$=\sum _{m=0}^{\infty }\frac{\left(4m\right)!16^{-m} }{\left(2m\right)!\left(2m+1\right)\sqrt{2} }  y^{m} $. Знайдено дисперсійну функцію $v\left(x\right)=x\left(2x+1\right)\left(4x+1\right)$, $x>0$. Побудовано розподіл з параметризацією середнім
\[\Pr \left(\xi =k\right)=\begin{pmatrix} 4k+1 \\ 2k \end{pmatrix}2^{-k} x^{k} \left(2k+1\right)^{k+\frac{1}{2} } \left(4k+1\right)^{-2k-\frac{3}{2} },\ \ x>0.\]
Доведено, що початкові моменти $\alpha _{m} $, центральні моменти $\mu _{m},$ кумулянти\linebreak
$\chi _{m} $, $m=1, 2, \ldots$ задовольняють такі рекурентні співвідношення: $\alpha _{m+1} =x\alpha _{m} \hm+v\left(x\right)\frac{d\alpha _{m} }{dx}$, $\alpha _{0} =1, \alpha _{1} =x$; $\mu _{m+1} =m\mu _{m-1} +v\left(x\right)\frac{d\mu _{m} }{dx}$, $\mu _{0} =1, \mu _{1} =0$; $\chi _{m+1} =v\left(x\right)\frac{d\chi _{m} }{dx},$ $\chi _{1} =x$.
\keywords {степеневий ряд, розподіл, параметризація середнім, числові характеристики, дисперсійна функція.}
\end{abstract}

%\begin{thebibliography}{99}
%\item  Noack~A. A class of random variables with discrete distribution. \textit{The Annals of Mathematical Statistics.} 1950. Vol.~21, No.~1. P.~127--132.
%
%\item  Kotz~S., Balakrishnan~N., Johnson~N.~L. Continuous Multivariate Distributions: Volume 1: Models and Applications. 2nd Edition. New York~: John Wiley \& Sons, 2000. 547~p.
%
%\item  Johnson~N.~L., Kemp~A.~W., Kotz~S. Univariate Discrete Distributions. 3rd Edition. Hoboken, NJ~: Wiley-Interscience, 2005. 543~p.
%
%\item  Consul~P.~C., Famoye~F. Lagrangian Probability Distributions. Boston, MA~: Birkh\"{a}user / CRC Press, 2006. 316~p.
%
%\item  Волков~Ю.~І. Розподіли степеневих рядів за заданими коваріаціями. \textit{Наукові записки Кіровоградського державного педагогічного університету імені В.~Винниченка. Серія: Фізико-математичні науки.} 2002. Вип.~43. С.~16--21.
%
%\item  Волков~Ю.~І., Войналович~Н.~М. Цикл лекцій з конкретної математики. Кіровоград~: Авангард, 2016. 205~с.
%\end{thebibliography}

\end{document}